\documentclass[pdflatex,sn-mathphys-ay]{sn-jnl}

\usepackage{amsmath,amssymb,amsfonts}
\usepackage{amsthm}
\usepackage{mathtools}
\usepackage{mathrsfs}
\usepackage{bm}
\usepackage{graphicx}
\usepackage{booktabs}
\usepackage{multirow}
\usepackage{tikz}
\usetikzlibrary{arrows.meta,positioning,calc}

\usepackage[nameinlink,capitalize,noabbrev]{cleveref}

\theoremstyle{plain}
\newtheorem{theorem}{Theorem}
\newtheorem{lemma}[theorem]{Lemma}
\newtheorem{proposition}[theorem]{Proposition}
\newtheorem{corollary}[theorem]{Corollary}

\newtheorem{conjecture}[theorem]{Conjecture}

\theoremstyle{remark}
\newtheorem{remark}[theorem]{Remark}
\newtheorem{example}[theorem]{Example}

\theoremstyle{definition}

\crefname{theorem}{theorem}{theorems}
\Crefname{theorem}{Theorem}{Theorems}
\crefname{lemma}{lemma}{lemmas}
\Crefname{lemma}{Lemma}{Lemmas}

\definecolor{colOne}{HTML}{55A8F5}
\definecolor{colTwo}{HTML}{68CD83}
\definecolor{colThree}{HTML}{F57888}
\definecolor{colOther}{HTML}{F3CC63}
\definecolor{edgeGray}{HTML}{536170}

\newcommand{\coregroup}[7]{%
  \begin{scope}[shift={(#2,#3)}]
    \fill[#4] (0,0) circle[radius=.64];
    \fill[#5]
      (0,0) -- (90:.64)
      arc[start angle=90,end angle=-90,radius=.64]
      -- cycle;
    \draw[white,line width=1pt] (0,-.64)--(0,.64);
    \node[
      circle,draw=edgeGray,line width=.9pt,
      minimum size=1.28cm,inner sep=0pt
    ] (#1) at (0,0) {};
    \node[font=\large] at (0,0) {$#6\rightarrow #7$};
  \end{scope}
}

\begin{document}

\title[Social Optimality Does Not Imply Coalitional Stability in Unweighted Max-\(k\)-Cut Games]{
Social Optimality Does Not Imply Coalitional Stability in Unweighted Max-\(k\)-Cut Games
}

\author*[1]{\fnm{Hritiz} \sur{Gogoi}}
\email{hgogoi@iitb.ac.in}

\affil*[1]{
  \orgdiv{Industrial Engineering and Operations Research},
  \orgname{Indian Institute of Technology Bombay},
  \orgaddress{
    \city{Mumbai},
    \postcode{400076},
    \state{Maharashtra},
    \country{India}
  }
}

\abstract{
\begin{abstract}

In Max-$k$-Cut games, optimal colorings maximize social
welfare, but are they stable against coordinated deviations
that strictly benefit every participating player?
A recurring conjecture in the literature asserts that,
for unweighted graphs, every optimal coloring has this
stability property, known as strong equilibrium.
We disprove the conjecture for every $k\ge3$ by exhibiting
a finite, simple, connected, unweighted graph with an
optimal coloring that is not a strong equilibrium.
The profitable coalitional deviation leads to another
optimal coloring: every coalition member gains, while
players outside the coalition bear the corresponding
losses and social welfare remains unchanged.
We also give an exact identity relating coalitional gains,
changes in cut value, and internal conflicts, recovering
the known stability guarantee for optimal colorings
when the maximum degree is at most $2k-1$.
For strong deviations from general Nash equilibria,
we establish a sharp upper bound on the number of
monochromatic edges remaining within the coalition
and show that this number can exceed the coalition size.
\end{abstract}
}

\keywords{
Max-\(k\)-Cut games,
Strong equilibria,
Coalitional deviations,
Optimal colorings,
Graph partitioning
}

\maketitle
\section{Introduction}
\label{sec:intro}

Can a socially optimal outcome be destabilized by a coalition whose
members all gain? The Max-$k$-Cut game provides a natural setting for
this question. Players are vertices of a graph, each chooses one of
$k$ colors, and each benefits from neighbors choosing different colors.
Consider, for example, transmitters selecting among $k$ communication
channels. An edge joins two transmitters that interfere when assigned
the same channel. In a simple unweighted model, each transmitter seeks
to minimize the number of interfering neighbors on its channel,
equivalently maximizing the number assigned different channels.
A channel assignment is therefore a \emph{coloring}, and the
\emph{utility} of each transmitter is the number of its neighbors
assigned a different channel. An edge whose endpoints have different
colors is called \emph{bichromatic} and represents a pair of
transmitters whose interference is avoided. Since each such edge
contributes one unit to the utilities of both endpoints, maximizing
total utility, or \emph{social welfare}, is equivalent to finding a
\emph{maximum $k$-cut} or equivalently, an optimal coloring: 
a coloring with the largest possible number of bichromatic edges.

At an optimal coloring, no transmitter can benefit by changing its
channel alone, since such an improvement would also increase the
number of bichromatic edges, contradicting optimality. Thus every
optimal coloring is also a \emph{Nash equilibrium}, an outcome in which
no player can gain by changing its choice alone. Transmitters may,
however, coordinate their channel changes, potentially benefiting
every participant at the expense of those outside the coordinating group.
Stability against such coordinated changes is more demanding:
a \emph{strong Nash equilibrium}, or simply \emph{strong equilibrium},
is an outcome in which no group of players, called a \emph{coalition},
can change their choices jointly so that every member strictly gains,
while the choices of players outside the coalition remain
fixed~\citep{aumann1959acceptable}.

Whether every optimal coloring is a strong equilibrium has been a
central open problem in the study of unweighted Max-$k$-Cut
games~\citep{carosi2019coalition,garuglieri2024optimal}.
We answer this question negatively for every $k\ge3$, even on
connected graphs. The counterexample preserves the cut value:
coalition members gain by eliminating internal conflicts, while
players outside the coalition bear the corresponding losses.

Formally, let $G=(V,E)$ be a finite simple graph and let
$k\ge2$ be an integer. Write $n:=|V|$, let $N(v)$ be the neighborhood of $v$,
and put $d(v):=|N(v)|$ and $\Delta(G):=\max_{v\in V}d(v)$.
The \emph{unweighted Max-$k$-Cut game}
on $(G,k)$ has vertex set $V$ as its set of players and
$[k]:=\{1,\dots,k\}$ as the strategy set of every player.
A \emph{coloring} is a map $\sigma\colon V\to[k]$, and the
utility of player $v$ under $\sigma$ is
\[
  u_v(\sigma)
  :=
  \bigl|\{u\in N(v):\sigma(u)\ne\sigma(v)\}\bigr|.
\]
Given a coloring, an edge is \emph{monochromatic} if its endpoints receive
the same color, and \emph{bichromatic} otherwise. The cut value is
\[
  \mathcal{C}(\sigma)
  :=
  \bigl|\{\{u,v\}\in E:\sigma(u)\ne\sigma(v)\}\bigr|,
\]
and the social welfare is $\sum_{v\in V}u_v(\sigma)=2\,\mathcal{C}(\sigma)$.
A coloring $\sigma^{\star}$ is \emph{optimal} if it maximizes social
welfare, equivalently the cut value, i.e.,
\[
  \sigma^{\star}\in
  \operatorname*{arg\,max}_{\sigma\colon V\to[k]}
  \sum_{v\in V}u_v(\sigma).
\]

For later use, define the number of neighbors of $v$ with color $c$ by
\[
  m_v(c;\rho):=|\{w\in N(v):\rho(w)=c\}|,
\]
and write $M(\rho):=|E|-\mathcal C(\rho)$ for the total number of
monochromatic edges under a coloring $\rho$.

A coloring is a \emph{Nash equilibrium} if no player can strictly
increase its utility by a unilateral recoloring. A \emph{strong
deviation} from $\sigma$ is a pair $(S,\tau)$, where
$\varnothing\ne S\subseteq V$, $\tau$ agrees with $\sigma$ outside
$S$, and every $v\in S$ changes color and satisfies
$u_v(\tau)>u_v(\sigma)$. Requiring every member to change color is
without loss of generality: members who do not move can be omitted.
A coloring is a \emph{strong equilibrium} if it admits no strong
deviation, and an \emph{$r$-strong equilibrium} if it admits none
with $|S|\le r$.

A unilateral recoloring changes the cut value by exactly the change
in the moving player's utility. This proves the Nash stability of
optimal colorings. For a coalition, however, an internal edge enters
the utilities of two members, whereas a boundary edge enters only
one. Thus a strict gain for each member need not increase the cut.
For $k=2$, every optimal coloring is nevertheless a strong
equilibrium, even in weighted graphs~\citep{gourves2009strong}.
For weighted graphs this implication fails when
$k\ge3$~\citep{gourves2010max}. The corresponding unweighted question
was formulated as the following conjecture
\citep{carosi2019coalition,garuglieri2024optimal}.

\begin{conjecture}\label{conj:optimal-strong}
For every finite simple unweighted graph $G$ and every
integer $k\ge3$, every optimal coloring of the Max-$k$-Cut
game on $(G,k)$ is a strong equilibrium.
\end{conjecture}

The conjecture would guarantee both the existence of a strong
equilibrium and the compatibility of coalitional stability with
maximum social welfare. Several partial results support it:
optimal colorings are $5$-strong equilibria
\citep{carosi2019coalition}, and subsequently were shown to be
$7$-strong equilibria~\citep{garuglieri2024optimal}.
Monochromatic coalitions cannot strongly deviate from an optimal
coloring~\citep{madeo2022game}. Table~\ref{tab:known} summarizes
these results and the contrast with the weighted setting. \\

\begin{table}[htbp]
\centering
\caption{Selected results on the stability of optimal colorings
in Max-$k$-Cut games. Here $\sigma^\star$ denotes an optimal
coloring, $n=|V(G)|$, and
$\Delta(G)=\max_{v\in V(G)}d(v)$ is the maximum vertex degree.
SE denotes strong equilibrium and $r$-SE denotes
$r$-strong equilibrium. A monochromatic coalition consists
of vertices having the same color under $\sigma^\star$.}
\label{tab:known}
\small
\setlength{\tabcolsep}{3pt}
\renewcommand{\arraystretch}{1.15}
\begin{tabular}{@{}p{.18\linewidth}p{.46\linewidth}p{.30\linewidth}@{}}
\toprule
Condition & Result & Reference \\
\midrule

\multicolumn{3}{@{}l}{\emph{Weighted graphs}} \\
\addlinespace[2pt]
$k=2$
& Every $\sigma^\star$ is an SE
& \cite{gourves2009strong} \\

$k\ge2$
& Every $\sigma^\star$ is a $3$-SE
& \cite{gourves2009strong,gourves2010max} \\

$k\ge3$
& An optimal coloring need not be a $4$-SE
& \cite{gourves2010max} \\

$k\ge n-2$
& Every $\sigma^\star$ is an SE
& \cite{gourves2010max} \\

\addlinespace[5pt]
\multicolumn{3}{@{}l}{\emph{Unweighted graphs}} \\
\addlinespace[2pt]
$k=2$
& Every $\sigma^\star$ is an SE
& \cite{gourves2009strong} \\

$k\ge2$
& Every $\sigma^\star$ is a $5$-SE
& \cite{carosi2019coalition} \\

$k\ge2$
& Every $\sigma^\star$ is a $7$-SE
& \cite{garuglieri2024optimal} \\

$k\ge2$,

& Every $\sigma^\star$ is an SE if $\Delta(G)\le2k-1$
& \cite{carosi2019coalition} \\

$k\ge2$
& No monochromatic coalition admits a strong
  deviation from $\sigma^\star$
& \cite{madeo2022game} \\

$k\ge3$
& An optimal coloring need not be an SE
& Theorem~\ref{thm:optimal-not-strong}\\
\bottomrule
\end{tabular}
\end{table}

Despite the stability guarantees above, optimal colorings
need not be stable against coalitions of arbitrary size.
Our main result disproves
Conjecture~\ref{conj:optimal-strong} for every $k\ge3$.

\begin{theorem}
\label{thm:optimal-not-strong}
For every integer $k\ge3$, there exists a finite simple,
connected, unweighted graph that admits an optimal
$k$-coloring which is not a strong equilibrium.
\end{theorem}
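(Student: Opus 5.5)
The plan is to give an explicit construction, guided by the identity hinted at in the abstract. For a deviation $(S,\tau)$ from $\sigma$, summing $u_v(\tau)-u_v(\sigma)$ over $v\in S$ counts each boundary edge once and each internal edge of $S$ twice. Writing $M_S(\rho)$ for the number of monochromatic edges inside $S$ under $\rho$, this gives
\[
  \sum_{v\in S}\bigl(u_v(\tau)-u_v(\sigma)\bigr)
  = \bigl(\mathcal C(\tau)-\mathcal C(\sigma)\bigr)+\bigl(M_S(\sigma)-M_S(\tau)\bigr).
\]
If $\sigma$ is optimal, then $\mathcal C(\tau)\le\mathcal C(\sigma)$. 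So every member can gain at least one unit only if $M_S(\sigma)-M_S(\tau)\ge|S|$. The target is therefore a coalition $S$ with at least $|S|$ monochromatic internal edges under $\sigma$, all of which become bichromatic under $\tau$, while $\mathcal C(\tau)=\mathcal C(\sigma)$. The extra internal gain must be paid for by losses to vertices outside $S$. Consistent with the $7$-SE result and the $\Delta\le 2k-1$ bound, $S$ must be large and have degrees at least $2k$.

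The concrete design is a ``core'' gadget for $k=3$, built from groups of coalition vertices. Each group moves from one color class $a$ to another class $b$, in the style of the pictured $a\rightarrow b$ group nodes. Within $S$, the vertices that share a color under $\sigma$ are joined so that internal conflicts are dense, about one monochromatic internal edge per vertex or more. The recoloring $\tau$ is a cyclic permutation of colors on $S$, arranged so that these conflicts disappear. Each coalition vertex is then attached to outside ``pendant'' structures: cliques or complete multipartite blocks with fixed colors. These are tuned so that, in $\sigma$, each vertex's choice is exactly tied and locally best, and in $\tau$ each member gains exactly one. The outside attachments absorb the $|S|$ units of loss, so the cut is unchanged. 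For general $k\ge3$, we would add $k-3$ extra colors. Each coalition vertex is joined to large cliques, or to blocks of $K_{k}$-type anchors, that force any optimal coloring to avoid the new colors near the gadget. An alternative is to join every core vertex to a set of anchor vertices occupying colors $4,\dots,k$, so that those colors are never useful. Connectivity is obtained by linking the pendant blocks through the core.

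Verifying the deviation is a routine count. The main obstacle is proving that $\sigma$ is \emph{optimal}, not merely a Nash equilibrium. The plan is to bound the maximum cut from above as $|E|$ minus a lower bound on the number of monochromatic edges. First, every $K_{k+1}$ forces at least one monochromatic edge, and more generally $K_m$ forces at least the Turán deficit $\binom{m}{2}-t_k(m)$. So the graph should be built as an edge-disjoint union of cliques (or dense blocks) whose forced monochromatic counts sum exactly to $M(\sigma)$. If exact decomposition fails, I would fall back to a small case analysis on the colors of the core vertices, exploiting symmetry, or to a computer-verifiable ILP certificate for the $k=3$ gadget. The $k\ge3$ lift is then handled by showing that the anchors make the extra colors irrelevant to the bound.
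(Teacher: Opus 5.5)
Your opening analysis is correct. It is the paper's deviation identity, together with the consequence $M_S(\sigma)\ge M_S(\tau)+|S|$. What follows it, however, is a design space rather than a proof. The theorem is an existence statement refuting a conjecture supported by the known $5$- and $7$-strong-equilibrium guarantees. Its content is therefore a specific graph, a specific pair $(\sigma,\tau)$, a check of the gains, and a certificate that $\sigma$ is optimal. You supply none of these. You also concede that the certificate may fail, and the fallbacks you name (case analysis, an ILP) are not carried out.

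One of your concrete design choices cannot work. Suppose $\tau$ acts on $S$ as a color permutation, $\tau=\pi\circ\sigma$ on $S$, whether $\pi$ is cyclic or not. Then for $u,v\in S$ we have $\tau(u)=\tau(v)$ if and only if $\sigma(u)=\sigma(v)$, so $M_S(\tau)=M_S(\sigma)$. Your identity then gives $\sum_{v\in S}(u_v(\tau)-u_v(\sigma))=\mathcal C(\tau)-\mathcal C(\sigma)\le 0$ at an optimal $\sigma$. Internal conflicts can disappear only if some $\sigma$-color class inside $S$ is \emph{split}. That is exactly the paper's mechanism:
\begin{itemize}
\item $A$ and $D$ both have color $1$ under $\sigma$, giving the $16=|Q|$ monochromatic $A$--$D$ edges.
\item They move to different colors ($A\colon 1\to3$, $D\colon 1\to2$), while $B\colon 2\to1$ and $C\colon 3\to2$; the map is not a permutation.
\item The coalition's total gain of $16$ is paid for by losses of $8$ each in $R_2$ and $R_3$.
\end{itemize}

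Your proposed optimality certificate is also unsubstantiated, and it is restrictive in this setting. The certificate is an edge-disjoint clique decomposition whose Tur\'an deficits sum to $M(\sigma)$. Because you aim for $\mathcal C(\tau)=\mathcal C(\sigma)$, a tight decomposition must be Tur\'an-balanced on every clique under both $\sigma$ and $\tau$. Consider a $K_{k+1}$ whose unique $\sigma$-conflict is an edge $s_1s_2$ inside $S$, with its other $k-1$ vertices outside $S$. Under $\tau$ this clique loses the color $\sigma(s_1)$ entirely, so it has at least two conflicts, exceeding its deficit of one.

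The paper certifies optimality differently.
\begin{itemize}
\item Its anchor is a complete $k$-partite graph with parts of size $25>24=M(\sigma)$.
\item A conflict-gap lemma shows that every coloring with at most $24$ conflicts colors the anchor properly, which fixes the anchor colors up to relabeling.
\item Every core vertex has six anchor neighbors of each color outside $\{\sigma(v),\tau(v)\}$. It has at most five conflicts under one of these two allowed colors, so each core vertex may be assumed to use $\sigma(v)$ or $\tau(v)$.
\item A direct count in $x=|\{i:\rho(a_i)=1\}|$ and $y=|\{i:\rho(b_i)=1\}|$ then gives $M(\rho)\ge 24$ over all $25$ pairs $(x,y)$.
\end{itemize}
What your proposal is missing is an explicit construction together with some such explicit, checkable certificate.
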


In our example, the coalition deviates to another optimal
coloring: every member strictly gains, although total
social welfare remains unchanged.
Section~\ref{sec:counterexample} presents the construction
and establishes its optimality through a combinatorial
count, completing the proof of
Theorem~\ref{thm:optimal-not-strong}.
Motivated by this example,
Section~\ref{sec:internal-conflicts} examines how changes
in conflicts within a coalition enable its members to
gain. It derives consequences for deviations from optimal
colorings and bounds the internal conflicts that can
remain after deviations from general Nash equilibria.

\section{An optimal coloring that is not a strong equilibrium}
\label{sec:counterexample}

Fix $k\ge3$. We construct a graph $G_k$ and two colorings $\sigma$
and $\tau$ that agree outside a sixteen-vertex coalition $Q$.
Every member of $Q$ gains one unit of utility under $\tau$, although
both colorings have the same cut value. The main step is to show
that this common value is optimal: the anchor graph first restricts
any competing coloring, after which a short counting argument
certifies optimality.

\subsection{Construction of the graph $G_k$}
\label{subsec:counterexample-graph}

Our construction consists of two vertex-disjoint parts, the
\emph{core graph} and the \emph{anchor graph}. The vertex set of the
core graph is the deviating coalition. The anchor graph is a complete
$k$-partite graph whose vertices play the role of outsiders: their
colors are unchanged by the deviation.

\paragraph{The core graph.}
Let
\[
  A=\{a_1,a_2,a_3,a_4\},\qquad
  B=\{b_1,b_2,b_3,b_4\},
\]
\[
  C=\{c_1,c_2,c_3,c_4\},\qquad
  D=\{d_1,d_2,d_3,d_4\}
\]
be four pairwise disjoint independent sets, and put
\[
  Q:=A\cup B\cup C\cup D.
\]
The set $Q$ will be the deviating coalition. Let $p$ be the permutation
of $\{1,2,3,4\}$ defined by
\[
  p(1)=2,\qquad p(2)=1,\qquad
  p(3)=4,\qquad p(4)=3.
\]
The core graph $G_k[Q]$ has exactly the following edges:
\begin{enumerate}
  \item $\{a_i,d_j\}$ for all $1\le i,j\le4$;
  \item $\{a_i,b_j\}$ and $\{b_i,d_j\}$ whenever $i\ne j$;
  \item $\{a_i,c_i\}$, $\{b_i,c_i\}$, and $\{b_i,c_{p(i)}\}$
        for $1\le i\le4$.
\end{enumerate}
Consequently,
\[
  |E(G_k[Q])|=16+12+12+4+4+4=52,
\]
and the core degrees of the vertices in $A,B,C,D$ are $8,8,3,7$,
respectively.

\paragraph{The anchor graph.}
For each $j\in[k]$, let
\[
  R_j=\{r_{j,1},\ldots,r_{j,25}\}
\]
be an independent set, and join every two vertices that lie in different
anchor sets. The anchors then induce the complete $k$-partite graph
\[
  K_{\underbrace{\scriptstyle 25,\ldots,25}_{k
  \text{ parts}}}.
\]

\paragraph{Edges between core and anchors.}
Each core vertex is joined to the first $m$ vertices
$r_{j,1},\ldots,r_{j,m}$ of every anchor set $R_j$, where $m$ is the
entry of the table in \Cref{fig:counterexample} for that vertex and that
set. The last column of the table applies separately to every $j$ with
$4\le j\le k$ and is absent when $k=3$. There are no further edges.
\Cref{fig:counterexample} summarizes the construction together with the
deviation defined in \Cref{subsec:counterexample-deviation}.

\paragraph{Size of the graph.}
All edges of $G_k$ have weight one. The graph is simple and connected,
and
\[
  |V(G_k)|=16+25k.
\]
The first three anchor sets contribute $128$ core--anchor edges, and
each further anchor set contributes $16\cdot6=96$. Hence
\begin{align}
  |E(G_k)|
  &=52+128+96(k-3)+625\binom{k}{2} \notag\\
  &=625\binom{k}{2}+96k-108.
  \label{eq:edge-count}
\end{align}

\begin{figure}[h]
\centering
\resizebox{0.82\linewidth}{!}{%
\begin{tikzpicture}[
  font=\small,
  core edge/.style={
    draw=edgeGray,line width=1.05pt
  },
  edge label/.style={
    fill=white,inner sep=2pt,font=\small
  },
  group info/.style={
    anchor=west,align=left,font=\small
  },
  anchor disk/.style={
    circle,draw=edgeGray,line width=.9pt,
    minimum size=1.15cm,inner sep=1pt
  }
]

\node[font=\bfseries\large] at (1.25,10.35)
  {Core graph};
\node[text=edgeGray] at (1.25,9.87)
  {Four independent sets of size \(4\)};

\node[font=\bfseries\large] at (8.3,10.35)
  {Anchor sets};
\node[text=edgeGray] at (8.3,9.87)
  {\(k\) independent sets of size \(25\)};

\coregroup{A}{0}{8.8}{colOne}{colThree}{1}{3}
\coregroup{B}{0}{6.1}{colTwo}{colOne}{2}{1}
\coregroup{C}{0}{3.4}{colThree}{colTwo}{3}{2}
\coregroup{D}{0}{0.7}{colOne}{colTwo}{1}{2}

\node[group info] at (1.9,8.8)
  {\(\boldsymbol{A}\quad(0+3+1+4)\)\\[3pt]
   Core degree: \(8\)};
\node[group info] at (1.9,6.1)
  {\(\boldsymbol{B}\quad(3+0+2+3)\)\\[3pt]
   Core degree: \(8\)};
\node[group info] at (1.9,3.4)
  {\(\boldsymbol{C}\quad(1+2+0+0)\)\\[3pt]
   Core degree: \(3\)};
\node[group info] at (1.9,0.7)
  {\(\boldsymbol{D}\quad(4+3+0+0)\)\\[3pt]
   Core degree: \(7\)};

\draw[core edge]
  (A.155)
  .. controls (-3.3,8.5) and (-3.3,1.0) ..
  node[edge label,pos=.50] {\(16\)}
  (D.205);

\draw[core edge]
  (A.215)
  .. controls (-1.05,7.8) and (-1.05,7.1) ..
  node[edge label,pos=.50] {\(12\)}
  (B.145);

\draw[core edge]
  (B.205)
  .. controls (-1.8,5.5) and (-1.8,1.7) ..
  node[edge label,pos=.53] {\(12\)}
  (D.150);

\draw[core edge]
  (A.25)
  .. controls (1.65,8.4) and (1.65,3.8) ..
  node[edge label,pos=.32] {\(4\)}
  (C.35);

\draw[core edge]
  (B.250) --
  node[edge label,left=2pt] {\(4\)}
  (C.110);

\draw[core edge,dashed]
  (B.290) --
  node[edge label,right=2pt] {\(4\)}
  (C.70);

\draw[
  rounded corners=9pt,
  draw=edgeGray!45,fill=edgeGray!3,line width=.7pt
] (6.15,-.2) rectangle (10.55,9.4);

\node[anchor disk,fill=colOne] at (7.2,8.45)
  {\(1\rightarrow1\)};
\node[anchor=west,font=\large] at (8.0,8.45)
  {\(R_1\)};

\node[anchor disk,fill=colTwo] at (7.2,6.45)
  {\(2\rightarrow2\)};
\node[anchor=west,font=\large] at (8.0,6.45)
  {\(R_2\)};

\node[anchor disk,fill=colThree] at (7.2,4.45)
  {\(3\rightarrow3\)};
\node[anchor=west,font=\large] at (8.0,4.45)
  {\(R_3\)};

\node[font=\Large] at (7.2,2.95) {\(\vdots\)};
\node[anchor=west,text=edgeGray] at (8.0,2.95)
  {if \(k>3\)};

\node[anchor disk,fill=colOther] at (7.2,1.45)
  {\(k\rightarrow k\)};
\node[anchor=west,font=\large] at (8.0,1.45)
  {\(R_k\quad(k\ge4)\)};

\node[align=center,font=\footnotesize,text=edgeGray]
  at (8.35,.3)
  {Every pair of distinct anchor sets\\
   is joined completely.};

\node[anchor=west,align=left,font=\footnotesize]
  at (-2.6,-.85)
  {Disk labels: \(\sigma(v)\rightarrow\tau(v)\).
   \quad Degree entries: neighbors in \(A,B,C,D\), respectively.\\[3pt]
   Curve labels: total edges between core groups.\\[3pt]
   $B$--$C$: solid = $b_ic_i$; dashed = $b_ic_{p(i)}$.};

\node[font=\bfseries] at (3.9,-1.75)
  {Number of anchor neighbors per core vertex};

\node[anchor=north,inner sep=0pt] at (3.9,-2.1) {%
  \renewcommand{\arraystretch}{1.25}%
  \setlength{\tabcolsep}{13pt}%
  \begin{tabular}{@{}lcccc@{}}
    \toprule
    Core vertex
      & \(\textcolor{colOne!65!black}{R_1}\)
      & \(\textcolor{colTwo!55!black}{R_2}\)
      & \(\textcolor{colThree!75!black}{R_3}\)
      & Each \(R_j,\ 4\le j\le k\) \\
    \midrule
    \(a_i\in A\) & \(0\) & \(6\) & \(3\) & \(6\) \\
    \(b_i\in B\) & \(0\) & \(1\) & \(6\) & \(6\) \\
    \(c_i\in C\) & \(6\) & \(0\) & \(1\) & \(6\) \\
    \(d_i\in D\) & \(0\) & \(3\) & \(6\) & \(6\) \\
    \bottomrule
  \end{tabular}%
};

\end{tikzpicture}%
}
\caption{%
The counterexample and the coalition deviation
\(\sigma\to\tau\), with colors indexed by \(1,\ldots,k\).
Each core disk represents four vertices; its left and right
halves indicate their colors before and after the deviation.
The entries beside each disk give each vertex's numbers of neighbors
in \(A,B,C,D\), respectively.
The solid and dashed $B$--$C$ edges represent the matchings
$\{b_ic_i\}$ and $\{b_ic_{p(i)}\}$, respectively.
All anchor vertices retain their colors.
The dots, yellow disk, and final table column apply only
when \(k\ge4\).
}
\label{fig:counterexample}
\end{figure}

\subsection{A profitable coalitional deviation}
\label{subsec:counterexample-deviation}

Define two colorings $\sigma$ and $\tau$ of $G_k$ as follows. Under
both colorings, every vertex of $R_j$ has color $j$, for $j\in[k]$. The
colors of the core vertices are given in \Cref{tab:coalition-payoffs},
which also lists the resulting utilities; these are derived below.
Thus exactly the vertices of $Q$ change their colors, and
$\tau(v)\neq\sigma(v)$ for every $v\in Q$.

\begin{table}[t]
\centering
\caption{Colors of the core vertices under $\sigma$ and $\tau$, and
the resulting utilities. Each row applies to every vertex of the
indicated group.}
\label{tab:coalition-payoffs}
\begin{tabular}{lcccc}
\toprule
Group & $\sigma$ & $\tau$
      & $u_v(\sigma)$ & $u_v(\tau)$ \\
\midrule
$A$ & 1 & 3 & $13+6(k-3)$ & $14+6(k-3)$ \\
$B$ & 2 & 1 & $14+6(k-3)$ & $15+6(k-3)$ \\
$C$ & 3 & 2 & $\phantom{1}9+6(k-3)$
            & $10+6(k-3)$ \\
$D$ & 1 & 2 & $12+6(k-3)$ & $13+6(k-3)$ \\
\bottomrule
\end{tabular}
\end{table}

\paragraph{Utilities of the coalition members.}
Under both colorings, the core vertices use only the colors $1,2,3$,
whereas the vertices of $R_j$ with $j\ge4$ have color $j$. Since every
core vertex has exactly six neighbors in each of $R_4,\dots,R_k$, all of
these edges are cut under both colorings. These sets therefore add
$6(k-3)$ to the utility of every core vertex, and this term cancels in
the comparison. It suffices to count the cut edges inside the core and
between the core and $R_1\cup R_2\cup R_3$.

As an example, consider $a_i\in A$. It has eight core neighbors (four in
$D$, three in $B$, one in $C$) and nine neighbors in
$R_1\cup R_2\cup R_3$. Under $\sigma$, four of its core edges and all
nine of its anchor edges are cut. Under $\tau$, all eight core edges are
cut, but only six of the nine anchor edges are. Hence
\[
  u_{a_i}(\sigma)=4+9+6(k-3)=13+6(k-3),\qquad
  u_{a_i}(\tau)=8+6+6(k-3)=14+6(k-3).
\]
The other three types of core vertices are treated in the same way;
\Cref{tab:utilities} lists the counts. In every case the utility
increases by exactly one.

\begin{table}[ht]
\centering
\caption{Cut edges and utilities of the core vertices under $\sigma$
and $\tau$. The last column omits the contribution $6(k-3)$ of
$R_4,\dots,R_k$, which is the same under both colorings.}
\label{tab:utilities}
\begin{tabular}{@{}lccc@{}}
\toprule
Vertex & Cut core edges & Cut edges to $R_1\cup R_2\cup R_3$
       & $u_v-6(k-3)$ \\
       & $\sigma\to\tau$ & $\sigma\to\tau$ & $\sigma\to\tau$ \\
\midrule
$a_i\in A$ & $4\to 8$ & $9\to 6$ & $13\to 14$ \\
$b_i\in B$ & $8\to 8$ & $6\to 7$ & $14\to 15$ \\
$c_i\in C$ & $3\to 3$ & $6\to 7$ & $9\to 10$ \\
$d_i\in D$ & $3\to 7$ & $9\to 6$ & $12\to 13$ \\
\bottomrule
\end{tabular}
\end{table}

Consequently,
\begin{equation}
  u_v(\tau)-u_v(\sigma)=1
  \qquad\text{for every }v\in Q.
  \label{eq:coalition-gain}
\end{equation}
Since $\tau$ agrees with $\sigma$ outside $Q$ and $\tau(v)\ne\sigma(v)$
for every $v\in Q$, the pair $(Q,\tau)$ is a strong deviation from
$\sigma$. Thus $\sigma$ is not a strong equilibrium.

The coalition's total gain of $16$ is exactly offset by losses among the
anchor vertices. The number of cut edges between $Q$ and $R_2$ decreases
from $36$ to $28$, and that between $Q$ and $R_3$ decreases from $60$ to
$52$. All other core--anchor edges keep their status. Since all anchor
colors remain unchanged, these decreases cause aggregate utility losses
of eight in each of $R_2$ and $R_3$, while all other anchor utilities
are unchanged. Hence the total social welfare, and therefore the cut
value, is the same under $\sigma$ and $\tau$.

Equivalently, we may count monochromatic edges. Recall that
$M(\rho)=|E(G_k)|-\mathcal C(\rho)$ for a coloring $\rho$ of $G_k$,
so maximizing the cut value is equivalent to minimizing $M$.
Table~\ref{tab:monochromatic} records the counts.

\begin{table}[ht]
\centering
\caption{Monochromatic edges under $\sigma$ and $\tau$.}
\label{tab:monochromatic}
\begin{tabular}{lcc}
\toprule
Location & $\sigma$ & $\tau$ \\
\midrule
Inside the core & 16 & 0 \\
Between the core and the anchors & 8 & 24 \\
Inside the anchor graph & 0 & 0 \\
\midrule
Total & 24 & 24 \\
\bottomrule
\end{tabular}
\end{table}

Under $\sigma$, the monochromatic core edges are the $16$ edges between
$A$ and $D$, and the monochromatic core--anchor edges are the $4$ edges
between $B$ and $R_2$ and the $4$ edges between $C$ and $R_3$. Under
$\tau$, every core edge is bichromatic, and the monochromatic
core--anchor edges are the $12$ edges between $A$ and $R_3$ and the $12$
edges between $D$ and $R_2$. In particular,
\begin{equation}
  M(\sigma)=M(\tau)=24.
  \label{eq:24-conflicts}
\end{equation}
It remains to prove that no $k$-coloring of $G_k$ has fewer than $24$
monochromatic edges.

\subsection{A certificate for optimality}
\label{subsec:anchor-rigidity}

The anchor graph turns the candidate value $M=24$ into a useful
restriction. We first show that fewer than $25$ conflicts force
the anchors to use distinct colors. We then eliminate forbidden
core colors and bound the conflicts of the remaining colorings.

\begin{lemma}[Conflict gap for complete multipartite graphs]
\label{lem:multipartite}
Let $H$ be the complete $k$-partite graph with
$n\ge1$ vertices in each part.
Every coloring $\rho\colon V(H)\to[k]$ is either
optimal or has at least $n$ monochromatic edges.
\end{lemma}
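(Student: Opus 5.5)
The plan is to reduce everything to the distribution of colors across parts. Let the parts be $P_1,\dots,P_k$, each of size $n$. For a coloring $\rho$, write $x_{c,t}:=|\{v\in P_t:\rho(v)=c\}|$, so that $\sum_c x_{c,t}=n$ for every part $t$. Put $y_c:=\sum_t x_{c,t}$. Since $H$ is complete multipartite, two vertices of color $c$ are adjacent exactly when they lie in different parts, so
\[
  M(\rho)=\sum_{c\in[k]}\;\sum_{1\le s<t\le k}x_{c,s}\,x_{c,t}.
\]
Coloring $P_t$ entirely with color $t$ gives $M=0$. Hence a coloring is optimal if and only if $M(\rho)=0$, and it suffices to show that $M(\rho)\ge1$ forces $M(\rho)\ge n$.

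First I would characterize the value $M(\rho)=0$. It holds iff every color occurs in at most one part. Every part is nonempty and there are exactly $k$ colors for $k$ parts, so pigeonhole then forces each part to be monochromatic with pairwise distinct colors. Now suppose $M(\rho)\ge1$. For each color $c$, let $m_c:=\max_t x_{c,t}$, attained at a part $t(c)$, and let $d_c:=y_c-m_c$ be the number of $c$-vertices outside $P_{t(c)}$. Each of those $d_c$ vertices is adjacent to all $m_c$ vertices of color $c$ in $P_{t(c)}$, so
\[
  M(\rho)\ \ge\ \sum_c m_c\,d_c .
\]
Writing $\mathrm{cross}_c:=\sum_{s<t,\;s,t\ne t(c)}x_{c,s}x_{c,t}$ for the remaining same-color pairs, this is $M(\rho)=\sum_c\bigl(m_c d_c+\mathrm{cross}_c\bigr)$.

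Next I would split according to whether $c\mapsto t(c)$ is a bijection.

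\emph{Not a bijection.} Some part $P_t$ is the argmax part of no color. Every vertex of $P_t$ then has a color $c$ with $t(c)\ne t$, and it conflicts with the $m_c\ge x_{c,t}\ge1$ vertices of color $c$ in $P_{t(c)}$. Summing over the $n$ vertices of $P_t$ gives at least $n$ monochromatic edges, all with one endpoint in $P_t$ and hence counted once.

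\emph{A bijection.} Each part $P_t$ has a home color $\pi(t)$, and $M\ge1$ means some vertex $v\in P_t$ has color $c\ne\pi(t)$. The vertex $v$ alone contributes $m_c$ conflicts with $P_{t(c)}$. The difficulty is that $m_c$ might be small. I would handle this by tracking where the remaining vertices of $P_{t(c)}$ go. Those $n-m_c$ vertices carry colors $c'\ne c$ whose home part is not $t(c)$, and each of them therefore conflicts with the $m_{c'}\ge x_{c',t(c)}$ vertices in its own home part. So one either gets $m_c\ge n/2$-type contributions from $v$ or $\ge n-m_c$ further conflicts from $P_{t(c)}$. To sharpen this into $m_c+(n-m_c)=n$, I would count directly the monochromatic edges incident to $P_{t(c)}$. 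Each vertex $w\in P_{t(c)}$ of color $c'$ sees all vertices of color $c'$ in other parts. Those of color $c$ (namely $w$ with $c'=c$) see $v$, while the others see their home-part copies. This yields at least $m_c\cdot1+(n-m_c)\cdot1=n$ distinct edges, each incident to $P_{t(c)}$.

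The main obstacle is making sure these edges are genuinely distinct and that every vertex of $P_{t(c)}$ has at least one conflicting partner. For a vertex of a non-home color $c'$, this needs $c'$ to appear in some other part. That holds because $t(c')\ne t(c)$ and $x_{c',t(c')}=m_{c'}\ge x_{c',t(c)}\ge1$. Since all the counted edges have exactly one endpoint in $P_{t(c)}$ and are indexed by that endpoint, there is no double counting. The same device settles the non-bijective case, which completes the argument.
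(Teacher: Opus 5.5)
Your proof is correct, but it takes a different route from the paper's.

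\textbf{The paper's route.} It argues by contrapositive. If $M(\rho)<n$, then each monochromatic edge meets each independent part in at most one vertex, so every part $R_i$ contains a vertex $v_i$ incident to no monochromatic edge. These $k$ vertices form a clique and therefore use all $k$ colors. Each vertex of $R_j$ is adjacent to every $v_i$ with $i\ne j$, so it is forced to take the color of $v_j$, which gives $M(\rho)=0$.

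\textbf{Your route.} You argue directly that $M(\rho)\ge1$ produces a part all of whose $n$ vertices lie on monochromatic edges. You locate that part through the majority map $c\mapsto t(c)$. If the map is not a bijection, you take a part outside its image. Otherwise you take the home part $t(c)$ of a misplaced color $c$. You correctly handle ties and unused colors, because only colors actually present in the chosen part are ever invoked.

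\textbf{Comparison.} Both proofs finish with the same counting step: distinct vertices of an independent part give distinct edges. The paper's argument is shorter and avoids both the case split and the bookkeeping in choosing $t(c)$. Yours has the small advantage of naming the saturated part explicitly. It also shows that in the non-bijective case $M(\rho)\ge n$ holds without invoking $M(\rho)\ge1$.

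Several pieces of your write-up are never used: the bound $M(\rho)\ge\sum_c m_c d_c$, the cross terms, and the ``$m_c\ge n/2$-type'' remark. The argument would read more cleanly without them.
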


\begin{proof}
A coloring with no monochromatic edges is optimal, since assigning
a distinct color to each part cuts every edge. Suppose $\rho$ has
fewer than $n$ monochromatic edges. In each part $R_i$, at most
one vertex per monochromatic edge is incident to such an edge.
Thus there is a vertex $v_i\in R_i$ incident to no monochromatic
edge. The vertices $v_1,\ldots,v_k$ form a clique, so their colors
are distinct; relabel them so that $\rho(v_i)=i$.
Every vertex of $R_j$ is adjacent to $v_i$ for each $i\ne j$,
and therefore cannot have color $i$. Hence all vertices of
$R_j$ have color $j$, and no edge is monochromatic.
\end{proof}

Since each anchor set has $25$ vertices,
Lemma~\ref{lem:multipartite} implies that any coloring
of $G_k$ with at most $24$ monochromatic edges
has no monochromatic edges within the anchor graph.
Consequently, each anchor set is monochromatic,
and distinct anchor sets receive distinct colors.
After permuting the color labels, we may therefore
assume that every vertex in $R_j$ has color $j$.

For a core vertex $v$, call
$\{\sigma(v),\tau(v)\}$ its set of allowed colors. Explicitly, the allowed sets are
\[
  \{1,3\}\text{ on }A,\qquad
  \{1,2\}\text{ on }B,\qquad
  \{2,3\}\text{ on }C,\qquad
  \{1,2\}\text{ on }D.
\]
Every color in $[k]\setminus\{\sigma(v),\tau(v)\}$ is
\emph{forbidden} for $v$. These terms describe the proof restrictions;
all $k$ colors remain available strategies in the game.

\begin{lemma}
\label{lem:allowed-colors}
Suppose the anchor sets are optimally colored,
with $R_j$ assigned color $j$.
If a core vertex uses a forbidden color, it can
be recolored with an allowed color so as to
strictly decrease the total number of
monochromatic edges.
\end{lemma}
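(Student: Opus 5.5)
The plan is to compare, for a single core vertex $v$, the number of monochromatic edges at $v$ under its current forbidden color with the number it would have under the better of its two allowed colors. Fix a coloring $\rho$ in which $R_j$ has color $j$ for every $j$. Recoloring one vertex $v$ from color $c$ to $c'$ changes only the edges at $v$, so the total number of monochromatic edges changes by exactly $m_v(c';\rho)-m_v(c;\rho)$. Hence it suffices to show that whenever $\rho(v)=c$ is forbidden, some allowed color $c'$ satisfies $m_v(c';\rho)<m_v(c;\rho)$.

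\emph{Lower bound for the forbidden color.} Since every vertex of $R_c$ has color $c$, we have $m_v(c;\rho)\ge$ (number of neighbors of $v$ in $R_c$). Reading the table in \Cref{fig:counterexample}, every forbidden color has at least $6$ anchor neighbors:
\begin{itemize}
\item for $A$, the forbidden colors are $2$ and $j\ge4$;
\item for $B$, they are $3$ and $j\ge4$;
\item for $C$, they are $1$ and $j\ge4$;
\item for $D$, they are $3$ and $j\ge4$.
\end{itemize}
In each case the anchor count is exactly $6$, so $m_v(c;\rho)\ge6$.

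\emph{Upper bound for the best allowed color.} For the two allowed colors $c_1,c_2$, the anchor neighbors contribute the entries of the table in columns $c_1$ and $c_2$. The core neighbors contribute at most the core degree of $v$ in total, since each core neighbor has only one color. Thus
\[
  m_v(c_1;\rho)+m_v(c_2;\rho)\le(\text{anchor count in }R_{c_1}\text{ and }R_{c_2})+d_Q(v),
\]
where $d_Q(v)$ is the core degree. The four groups give:
\begin{itemize}
\item $A$: $0+3+8=11$;
\item $B$: $0+1+8=9$;
\item $C$: $0+1+3=4$;
\item $D$: $0+3+7=10$.
\end{itemize}
By averaging, some allowed color has $m_v\le5$. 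This is strictly less than $6\le m_v(c;\rho)$, so recoloring $v$ to that color strictly decreases the number of monochromatic edges.

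The only delicate point is the bookkeeping. One must make sure the forbidden-color bound uses only anchors, which are fixed by hypothesis. One must also make sure that no core neighbor is double counted across the two allowed colors; this holds automatically because each neighbor has a single color. The tightest case is $A$ (and $D$), where the sum is $11$ (respectively $10$) against the threshold $12$. I would double-check the table entries there, since a margin of one is all that separates success from failure.
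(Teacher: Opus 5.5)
Your proposal is correct and follows essentially the same argument as the paper. A forbidden color forces at least six conflicts from the corresponding anchor set, while the two allowed colors together receive at most $11$, $9$, $4$, $10$ conflicts (anchor entries plus core degree), so averaging yields an allowed color with at most $5$ conflicts, and recoloring a single vertex changes $M$ by exactly $m_v(c';\rho)-m_v(c;\rho)$; your table readings and core degrees all check out, including the tight cases $A$ and $D$.
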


\begin{proof}
Let $\rho$ be the current coloring and fix a core vertex $v$ using
a forbidden color $c$. Keep all other colors fixed. Assigning
color $a$ to $v$ creates exactly $m_v(a;\rho)$ incident
monochromatic edges. Every forbidden color has six neighbors of
$v$ in the corresponding anchor set, so $m_v(c;\rho)\ge6$.
We show that at least one allowed color gives
$m_v(a;\rho)\le5$.

Let $\alpha$ and $\beta$ be the two allowed colors
of $v$.
A core neighbor can have color $\alpha$ or color
$\beta$, but not both.
Consequently, the total contribution of core
neighbors to $m_v(\alpha;\rho)+m_v(\beta;\rho)$ is at most
the core degree of $v$.
The anchor contributions are fixed by the construction.
Therefore,
\[
\begin{array}{c|c|c|c}
\text{Group}
&
\text{Core degree}
&
\text{Anchor costs}
&
m_v(\alpha;\rho)+m_v(\beta;\rho)\text{, at most}
\\ \hline
A & 8 & 0,\ 3 & 11\\
B & 8 & 1,\ 0 & 9\\
C & 3 & 1,\ 0 & 4\\
D & 7 & 0,\ 3 & 10
\end{array}
\]
Since these counts are integers,
\[
  \min\{m_v(\alpha;\rho),m_v(\beta;\rho)\}
  \le
  \left\lfloor
    \frac{m_v(\alpha;\rho)+m_v(\beta;\rho)}{2}
  \right\rfloor.
\]
Hence an allowed color yields at most $5$, $4$,
$2$, or $5$ incident monochromatic edges for a
vertex in $A$, $B$, $C$, or $D$, respectively.
In every case, this is fewer than the at least six
conflicts under its current forbidden color.

Finally, recoloring $v$ changes only edges incident
to $v$.
The strict decrease in its incident monochromatic
edges is therefore also a strict decrease in the
total number of monochromatic edges.
\end{proof}

This argument remains valid when other core
vertices use forbidden colors. Repeating the
operation produces a coloring using only
allowed colors on the core, strictly reducing $M$ at each step. Each step removes one
forbidden-colored vertex, so the process terminates. We now bound
the conflicts in the resulting restricted coloring.

\begin{lemma}\label{lem:optimality-certificate}
Suppose every vertex of $R_j$ has color $j$, and
every core vertex uses an allowed color.
Then the resulting coloring $\rho$ satisfies
\[
  M(\rho)\ge24.
\]
\end{lemma}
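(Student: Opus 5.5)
The plan is to turn $M(\rho)$ into an explicit function of a few binary choices and then minimise it. Under the hypotheses of the lemma, every core vertex chooses between its two allowed colors. The anchor cost of each choice can be read off the table in \Cref{fig:counterexample}:
\begin{itemize}
\item $a_i$ pays $0$ for color $1$ and $3$ for color $3$;
\item $b_i$ pays $0$ for color $1$ and $1$ for color $2$;
\item $c_i$ pays $0$ for color $2$ and $1$ for color $3$;
\item $d_i$ pays $0$ for color $1$ and $3$ for color $2$.
\end{itemize}
Anchor sets $R_j$ with $j\ge4$ never contribute. The only possibly monochromatic core edges are:
\begin{itemize}
\item $A$--$D$ edges, when both endpoints have color $1$;
\item $A$--$B$ edges, when both have color $1$;
\item $B$--$D$ edges, when both endpoints have the same color;
\item $A$--$C$ edges, when both have color $3$;
\item $B$--$C$ edges, when both have color $2$.
\end{itemize}
Write $\alpha$ for the number of vertices of $A$ with color $1$ and $\delta$ for the number of vertices of $D$ with color $1$. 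Then
\[
M(\rho)=\alpha\delta+3(4-\alpha)+3(4-\delta)+\sum_{j=1}^4 \beta_j+\sum_{i=1}^4 \gamma_i,
\]
where $\beta_j$ and $\gamma_i$ collect the costs charged to $b_j$ and $c_i$. Here $\beta_j$ counts the anchor edge of $b_j$ together with its monochromatic edges to $A$ and $D$. Each $\gamma_i\ge0$ counts the anchor edge of $c_i$ together with its monochromatic core edges.

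Next I bound each $\beta_j$ from below in terms of $\alpha$ and $\delta$ alone. If $b_j$ has color $1$, then
\[
\beta_j=|A_1\setminus\{a_j\}|+|D_1\setminus\{d_j\}|\ge(\alpha-1)^++(\delta-1)^+ .
\]
If $b_j$ has color $2$, then
\[
\beta_j=1+|D_2\setminus\{d_j\}|\ge1+(3-\delta)^+ .
\]
Here $A_1$ is the set of color-$1$ vertices of $A$, and $D_1,D_2$ are the color-$1$ and color-$2$ vertices of $D$. Hence $\sum_j\beta_j\ge4\,\mu(\alpha,\delta)$, where $\mu$ is the smaller of these two bounds. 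Since
\[
\alpha\delta+24-3\alpha-3\delta=(\alpha-3)(\delta-3)+15,
\]
it remains to check, over the $25$ pairs $(\alpha,\delta)\in\{0,\dots,4\}^2$, that
\[
(\alpha-3)(\delta-3)+15+4\mu(\alpha,\delta)\ge24 .
\]
I would tabulate this directly. For instance, at $\alpha=4$, $\delta=0$ every $b_j$ costs at least $3$ whichever color it takes, which gives $12+12=24$ exactly. At $\alpha=\delta=0$ the anchor terms alone already give $24$.

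The main obstacle is the handful of pairs where the crude bound $(\alpha-1)^+$ loses too much and the total dips below $24$. These are the cases where $b_j$'s own index partner $a_j$ or $d_j$ is the excluded vertex; typical examples are $\alpha$ near $3$ or $4$ with $\delta$ small. In those cases I would refine the argument in two ways. First, the slack $-1$ can only be used by those $b_j$ whose partner $a_j$, respectively $d_j$, actually has the relevant color, which limits how many $b_j$ can profit from it simultaneously. Second, I would bring in the $C$-terms. For each $i$, $c_i$ pays
\[
\min\bigl\{1+[\rho(a_i)=3],\;[\rho(b_i)=2]+[\rho(b_{p(i)})=2]\bigr\}.
\]
So whenever several $b$'s take color $2$ and several $a$'s take color $3$, the $\gamma_i$ contribute the missing units. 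This case check is finite and mechanical. I expect it to close with equality exactly at configurations such as $\sigma$ (where $\alpha=\delta=4$, $\beta=0$, and the $A$--$D$ term is $16$) and $\tau$.
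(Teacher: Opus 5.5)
Your reduction does not close, and the failure is not limited to a handful of pairs. With $\gamma_i\ge0$ discarded, the quantity $(\alpha-3)(\delta-3)+15+4\mu(\alpha,\delta)$ reaches $24$ only at $(\alpha,\delta)\in\{(0,0),(2,2),(4,0),(4,1)\}$. At the other $21$ pairs it is smaller: for instance $20$ at $(4,4)$, $19$ at $(3,3)$, and $16$ at $(0,4)$.

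At $(0,4)$ your bound $\mu=1$ is exact (color $2$ costs each $b_j$ exactly $1$, color $1$ exactly $3$). So the whole deficit of $8$ must come from the $C$-terms, not from the $-1$ slack. The same is true at $\sigma$ itself. There each $b_j$ has color $2$ and pays its edge to $R_2$, so $\sum_j\beta_j=4$, not $\beta=0$ as you state. The equality $M(\sigma)=24$ therefore also needs $\sum_i\gamma_i=4$.

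So the $\gamma_i$ carry a large part of the bound almost everywhere, not just in a few exceptional cases. Your proposal contains no inequality that extracts them. Calling the remaining check ``finite and mechanical'' is true only of the original $2^{16}$-case problem.

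The structural cause is your choice of conditioning. After fixing $A$ and $D$, the remaining set $B\cup C$ is not independent: the two $B$--$C$ matchings form the $4$-cycles $b_1c_1b_2c_2$ and $b_3c_3b_4c_4$. You therefore cannot minimize each $b_j$ separately and then add $\gamma_i\ge0$. The $\gamma_i$ depend on exactly the $B$-colors you minimized out. They also depend, through the edges $a_ic_i$, on which individual $a_i$ have color $3$, which the counts $\alpha,\delta$ do not record.

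The paper conditions on $A\cup B$ instead. Since $C\cup D$ is independent and has no $C$--$D$ edges, once $\rho|_{A\cup B}$ is fixed, every $c_i$ and $d_i$ can be optimized on its own. Let $x,y$ be the numbers of color-$1$ vertices in $A$ and $B$, and let $z$ be the number of indices with $\rho(a_i)=\rho(b_i)=1$.
\begin{itemize}
\item The $A$--$B$ edges and the anchor edges of $A\cup B$ contribute $16-3x-y+xy-z$.
\item The edges at $C$ contribute at least $8-2y-x+z$. Coloring all of $C$ with $2$ costs $2(4-y)$, and recoloring $c_i$ to $3$ saves at most one conflict, only when $\rho(a_i)=1$ and $\rho(b_i)=2$.
\end{itemize}
The $z$ cancels. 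Taking for each $d_i$ the cheaper of its two colors then leaves an explicit $L(x,y)$ whose $25$ values are all at least $24$.
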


\begin{proof}
Write
\[
  x:=|\{a_i:\rho(a_i)=1\}|,\qquad
  y:=|\{b_i:\rho(b_i)=1\}|,
\]
and let
\[
  z:=|\{i:\rho(a_i)=\rho(b_i)=1\}|.
\]
Thus $0\le x,y\le4$.
We partition the potentially monochromatic edges
into three groups.

\medskip
\noindent
\emph{Edges between $A$ and $B$, and their anchor edges.}
The only common allowed color of $A$ and $B$ is $1$.
Since $a_i b_j$ is an edge precisely when $i\ne j$,
there are $xy-z$ monochromatic edges between these sets.
The monochromatic anchor edges incident to $A$ and $B$
number $3(4-x)$ and $4-y$, respectively.
Their combined contribution is therefore
\[
  16-3x-y+xy-z.
\]

\medskip
\noindent
\emph{Edges incident to $C$.}
First assign color $2$ to every vertex of $C$.
The resulting number of conflicts on edges incident to $C$
is $2(4-y)$: each of the $4-y$ vertices of $B$ with
color $2$ has exactly two neighbors in $C$.

Now consider recoloring $c_i$ from $2$ to $3$.
Its number of conflicts changes from the number of
color-$2$ vertices among $b_i,b_{p(i)}$ to one if
$\rho(a_i)=1$ (due to the conflict with the anchor),
and two if $\rho(a_i)=3$.
Thus assigning color $2$ to $c_i$ creates at most two
conflicts, while assigning color $3$ creates at least one.
Hence a strict reduction is possible only when both
$b_i$ and $b_{p(i)}$ have color $2$ and $\rho(a_i)=1$;
in that case, the saving is exactly one.

We now bound the total saving from recoloring vertices
of $C$. Each vertex $c_i$ can save at most one conflict,
and a saving requires, in particular,
$\rho(a_i)=1$ and $\rho(b_i)=2$.
Of the $x$ indices with $\rho(a_i)=1$, exactly $z$
have $\rho(b_i)=1$, leaving $x-z$ with $\rho(b_i)=2$.
Thus at most $x-z$ vertices of $C$ can save a conflict.

Because no two vertices of $C$ are adjacent, recoloring
one does not change the conflicts incident to another.
Starting from the $2(4-y)$ conflicts obtained by coloring
all of $C$ with color $2$, we can therefore remove at
most $x-z$ conflicts. Hence every allowed coloring of
$C$ leaves at least
\[
  2(4-y)-(x-z)=8-2y-x+z
\]
conflicts on edges incident to $C$.


\medskip
\noindent
\emph{Edges incident to $D$.}
If $\rho(b_i)=1$, assigning color $1$ or color $2$
to $d_i$ produces, respectively,
\[
  x+y-1
  \qquad\text{or}\qquad
  7-y
\]
monochromatic edges, including its anchor edges.
If $\rho(b_i)=2$, the corresponding counts are
\[
  x+y
  \qquad\text{or}\qquad
  6-y.
\]
Indeed, $d_i$ is adjacent to every vertex of $A$,
to every vertex of $B$ except $b_i$, and to
three vertices of $R_2$.

There are $y$ indices of the first type and $4-y$
of the second. Also, combining the contributions counted
above, the total number of conflicts on edges incident
to $A\cup B\cup C$ is at least
\[
  (16-3x-y+xy-z)+(8-2y-x+z)
  =24-4x-3y+xy.
\]

Hence
\begin{align*}
  M(\rho)\ge L(x,y)
  &:= 
  24+x(y-4)-3y\\
  &\quad
  +y\min\{x+y-1,\,7-y\}\\
  &\quad
  +(4-y)\min\{x+y,\,6-y\}.
\end{align*}

Since $x,y\in\{0,1,2,3,4\}$, we can evaluate this
bound directly:

\[
\begin{array}{c|ccccc}
  x\backslash y & 0 & 1 & 2 & 3 & 4\\
  \hline
  0 & 24 & 24 & 24 & 24 & 24\\
  1 & 24 & 25 & 26 & 26 & 24\\
  2 & 24 & 26 & 28 & 28 & 24\\
  3 & 24 & 27 & 28 & 27 & 24\\
  4 & 24 & 28 & 28 & 26 & 24
\end{array}
\]

Every entry is at least $24$, so $M(\rho)\ge24$.

\end{proof}

\begin{proof}[Proof of Theorem~\ref{thm:optimal-not-strong}]
Fix $k\ge3$. The graph $G_k$ constructed in \Cref{sec:counterexample} is finite,
simple, connected, and unweighted.

We first show that every coloring of $G_k$ has at least
$24$ conflicts. Suppose, to the contrary, that a coloring
$\rho$ has fewer than $24$ conflicts.
By Lemma~\ref{lem:multipartite}, the anchor sets must
receive distinct colors. Relabeling the colors, we may
assume that their colors agree with those in the construction.
We can then apply Lemma~\ref{lem:allowed-colors} repeatedly
to give every core vertex an allowed color without increasing
the number of conflicts. The resulting coloring still has
fewer than $24$ conflicts, contradicting
Lemma~\ref{lem:optimality-certificate}.

Since $M(\sigma)=M(\tau)=24$, both $\sigma$ and $\tau$
minimize the number of conflicts and hence maximize
the cut size. However, $\tau$ agrees with $\sigma$ outside $Q$, and
Equation~\eqref{eq:coalition-gain} shows that every
member of $Q$ strictly gains by deviating to $\tau$.
Thus $\sigma$ is optimal but not a strong equilibrium.
\end{proof}

\begin{remark}
The construction shows that an optimal coloring
need not be a strong equilibrium.
It does not rule out the existence of another
optimal coloring of $G_k$ that is a strong
equilibrium, and therefore leaves the general
existence question unresolved.
Also, we make no claim that the graph order or the
deviating coalition size is smallest possible.
\end{remark}

\section{Internal conflicts in improving coalitions}
\label{sec:internal-conflicts}

The preceding construction shows that an optimal coloring
need not be a strong equilibrium. Its mechanism is the
elimination of internal conflicts: the coalition removes
sixteen monochromatic edges within the core, allowing every
member to gain one unit of utility while the cut value
remains unchanged. We first formalize this mechanism through
an identity relating the coalition's total gain to the change
in its internal conflicts. This yields a necessary condition
for a strong deviation from an optimal coloring and a
sufficient condition for optimal colorings to be strong
equilibria. We then turn to deviations from general Nash
equilibria, establishing a sharp bound on the conflicts that
can remain after a deviation. In particular, the sharpness
construction also disproves the putative linear bound
$M_S(\tau)\le |S|$ for arbitrary strong deviations.
This claim must be distinguished from a bound restricted to
minimal coalitions.

Throughout this section, $G=(V,E)$ is a finite simple
unweighted graph. For a coloring $\rho$ and a set
$S\subseteq V$, write
\[
M_S(\rho)
:=
\bigl|\{\{u,v\}\in E(G[S]):\rho(u)=\rho(v)\}\bigr|,
\qquad
e(S):=|E(G[S])|.
\]
Thus $M_S(\rho)$ counts the internal conflicts of $S$, and
$M_V(\rho)=M(\rho)$. For a strong deviation $(S,\tau)$
from $\sigma$, every vertex of the nonempty coalition $S$
changes color and strictly increases its utility, while
$\tau$ agrees with $\sigma$ outside $S$.

\subsection{Internal conflicts at an optimal coloring}
\label{subsec:optimal-conflicts}

The following elementary identity makes precise the distinction
between utility gains and cut improvement. It uses the same
internal--boundary edge accounting as the cut-change inequality
in \citet[Lemma~1]{garuglieri2024optimal}; we include the exact
identity and its proof to make the role of surviving conflicts explicit.

\begin{lemma}[Deviation identity]
\label{lem:deviation-identity}
Let $\sigma$ and $\tau$ be colorings that agree outside
$S\subseteq V$. Then
\begin{equation}
\label{eq:deviation-identity}
\mathcal C(\tau)-\mathcal C(\sigma)
=
\sum_{v\in S}\bigl(u_v(\tau)-u_v(\sigma)\bigr)
-
\bigl(M_S(\sigma)-M_S(\tau)\bigr).
\end{equation}
\end{lemma}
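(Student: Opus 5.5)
We prove the identity by classifying edges according to how many endpoints lie in $S$, and computing both sides as sums over edge classes. Partition $E$ into three sets: $E_0$, the edges with no endpoint in $S$; $E_1$, the boundary edges with exactly one endpoint in $S$; and $E_2=E(G[S])$, the internal edges. For a coloring $\rho$ and an edge $e$, write $\chi_e(\rho)\in\{0,1\}$ for the indicator that $e$ is bichromatic under $\rho$. Then $\mathcal C(\rho)=\sum_{e\in E}\chi_e(\rho)$, and the left-hand side of \eqref{eq:deviation-identity} splits as a sum of $\chi_e(\tau)-\chi_e(\sigma)$ over $E_0$, $E_1$ and $E_2$. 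Since $\sigma$ and $\tau$ agree outside $S$, every edge of $E_0$ has the same status under both colorings, so the $E_0$ part vanishes.

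Next we expand the coalition's total utility change. By definition $u_v(\rho)=\sum_{e\ni v}\chi_e(\rho)$, so
\[
\sum_{v\in S}\bigl(u_v(\tau)-u_v(\sigma)\bigr)=\sum_{e\in E}|e\cap S|\,\bigl(\chi_e(\tau)-\chi_e(\sigma)\bigr).
\]
In this double count each boundary edge appears once and each internal edge appears twice. Subtracting the cut difference (split as above) therefore leaves exactly
\[
\sum_{e\in E_2}\bigl(\chi_e(\tau)-\chi_e(\sigma)\bigr).
\]

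Finally, for internal edges we have $\chi_e=1-[\text{$e$ monochromatic}]$. Hence this leftover sum equals $M_S(\sigma)-M_S(\tau)$. Rearranging gives \eqref{eq:deviation-identity}.

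No step is a real obstacle. The only care needed is in the double-counting coefficient $|e\cap S|$ and in the sign convention when converting bichromatic indicators to monochromatic counts on $E(G[S])$. Note that the identity uses neither optimality nor the strong-deviation hypothesis, only that $\tau=\sigma$ off $S$.
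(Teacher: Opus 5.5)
Your proof is correct and follows essentially the same route as the paper. Both split edges by how many endpoints lie in $S$, drop the edges outside $S$, observe that the coalition's utility sum counts boundary edges once and internal edges twice, and subtract; the only difference is that you phrase the bookkeeping with indicator sums $\chi_e$, whereas the paper uses the counts $B_S(\rho)$ and $M_S(\rho)$ directly.
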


\begin{proof}
Let $B_S(\rho)$ denote the number of bichromatic edges
with exactly one endpoint in $S$ under $\rho$. Edges
outside $S$ are unchanged, so
\[
\mathcal C(\tau)-\mathcal C(\sigma)
=
B_S(\tau)-B_S(\sigma)
+
M_S(\sigma)-M_S(\tau).
\]
The sum of the coalition's utilities counts each
bichromatic boundary edge once and each bichromatic
internal edge twice. Hence
\[
\sum_{v\in S}\bigl(u_v(\tau)-u_v(\sigma)\bigr)
=
B_S(\tau)-B_S(\sigma)
+
2\bigl(M_S(\sigma)-M_S(\tau)\bigr).
\]
Subtracting gives~\eqref{eq:deviation-identity}.
\end{proof}

\begin{theorem}[Necessary internal conflicts]
\label{thm:necessary-conflicts}
If $\sigma$ is an optimal coloring and $(S,\tau)$ is a
strong deviation from $\sigma$, then
\begin{equation}
\label{eq:necessary-conflicts}
M_S(\sigma)
\ge
M_S(\tau)+|S|
\ge
|S|.
\end{equation}
Consequently, an optimal coloring $\sigma$ is a strong
equilibrium whenever
\[
M_S(\sigma)<|S|
\qquad
\text{for every nonempty }S\subseteq V.
\]
\end{theorem}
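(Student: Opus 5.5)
The plan is to read the bound directly off the deviation identity, \Cref{lem:deviation-identity}, combined with optimality of $\sigma$ and the integrality of utilities. First, since $(S,\tau)$ is a strong deviation, $\tau$ agrees with $\sigma$ outside $S$, so \eqref{eq:deviation-identity} applies to the pair $(\sigma,\tau)$ with this $S$. Second, since $\sigma$ is optimal, $\mathcal C(\tau)\le\mathcal C(\sigma)$, so the left-hand side of \eqref{eq:deviation-identity} is nonpositive. Rearranging gives
\[
M_S(\sigma)-M_S(\tau)\;\ge\;\sum_{v\in S}\bigl(u_v(\tau)-u_v(\sigma)\bigr).
\]

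Third, every utility is an integer, and each $v\in S$ satisfies $u_v(\tau)>u_v(\sigma)$. Hence $u_v(\tau)-u_v(\sigma)\ge1$ for every member, and the sum is at least $|S|$. This yields $M_S(\sigma)\ge M_S(\tau)+|S|$. The second inequality in \eqref{eq:necessary-conflicts} is immediate because $M_S(\tau)\ge0$ is a count of edges.

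For the consequence, argue by contraposition. If the optimal coloring $\sigma$ admitted a strong deviation $(S,\tau)$, then $S$ is nonempty and, by the first part, $M_S(\sigma)\ge|S|$. This contradicts the hypothesis $M_S(\sigma)<|S|$ for every nonempty $S$, so $\sigma$ admits no strong deviation and is a strong equilibrium.

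There is no real obstacle here. The only points requiring care are the sign convention when moving terms across \eqref{eq:deviation-identity}, and making explicit that strict gains are at least one because utilities are integers in the unweighted setting. It is worth noting in passing that the counterexample of \Cref{sec:counterexample} is tight: there $|S|=16$, $M_S(\sigma)=16$ and $M_S(\tau)=0$, so equality holds throughout.
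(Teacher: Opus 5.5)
Your proposal is correct and follows the paper's proof essentially line for line. Like the paper, you combine the deviation identity with $\mathcal C(\tau)\le\mathcal C(\sigma)$ and the fact that each strict gain is at least one in the unweighted setting; the consequence then follows immediately, and your tightness remark matches the paper's observation that $M_Q(\sigma)=16=|Q|$ and $M_Q(\tau)=0$.
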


\begin{proof}
Every deviating player gains at least one unit of utility,
since the graph is unweighted. By optimality and
Lemma~\ref{lem:deviation-identity},
\[
0
\ge
\mathcal C(\tau)-\mathcal C(\sigma)
\ge
|S|-\bigl(M_S(\sigma)-M_S(\tau)\bigr),
\]
which proves~\eqref{eq:necessary-conflicts}. The final
statement follows immediately.
\end{proof}

The optimal-coloring counterexample in the preceding
section attains~\eqref{eq:necessary-conflicts} with
equality. Indeed, its coalition $Q$ satisfies
\[
M_Q(\sigma)=16=|Q|,
\qquad
M_Q(\tau)=0,
\]
and every member gains exactly one unit of utility.
Thus the construction realizes the smallest possible
initial internal-conflict count permitted by
Theorem~\ref{thm:necessary-conflicts} for a coalition of
its size.

The next consequence is already known: it is
\citet[Proposition~9]{carosi2019coalition}. We include a short
proof to show how it follows from the internal-conflict criterion.

\begin{corollary}[Bounded maximum degree; \citealp{carosi2019coalition}]
\label{cor:bounded-degree}
If $\Delta(G)\le 2k-1$, then every optimal coloring of
the Max-$k$-Cut game on $(G,k)$ is a strong equilibrium.
In particular, this holds whenever $\Delta(G)\le 5$
and $k\ge 3$.
\end{corollary}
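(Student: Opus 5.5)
We plan to use the criterion of Theorem~\ref{thm:necessary-conflicts}: it is enough to show that, when $\Delta(G)\le 2k-1$, every optimal coloring $\sigma$ satisfies $M_S(\sigma)<|S|$ for every nonempty $S\subseteq V$. The key local fact is a pigeonhole bound on monochromatic degree at an optimal coloring. Since $\sigma$ is a Nash equilibrium (as noted in the introduction, any unilateral improvement would increase the cut), each vertex $v$ has color $\sigma(v)$ minimizing $m_v(c;\sigma)$ over $c\in[k]$. Summing over the $k$ colors, $\sum_c m_v(c;\sigma)=d(v)$, so
\[
m_v(\sigma(v);\sigma)\le \Bigl\lfloor \frac{d(v)}{k}\Bigr\rfloor \le \Bigl\lfloor\frac{2k-1}{k}\Bigr\rfloor = 1 .
\]
Thus every vertex lies on at most one monochromatic edge, and the monochromatic edges of $\sigma$ form a matching.

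Next we bound the internal conflicts of an arbitrary $S$. The monochromatic edges inside $G[S]$ form a matching on $S$, so $M_S(\sigma)\le \lfloor |S|/2\rfloor<|S|$ for every nonempty $S$. Theorem~\ref{thm:necessary-conflicts} then shows that no strong deviation exists, so $\sigma$ is a strong equilibrium. The statement for $\Delta(G)\le5$ and $k\ge3$ follows because $5\le 2k-1$ whenever $k\ge 3$.

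No step is really difficult. The only point needing care is that the Nash property is used in its minimizing form: $\sigma(v)$ attains the minimum of $m_v(\cdot;\sigma)$ because any color with strictly fewer same-colored neighbors would be a strict unilateral improvement. The floor is also essential, since $d(v)/k<2$ forces the minimum to be at most $1$. As a cross-check, the bound $M_S(\sigma)\le\lfloor|S|/2\rfloor$ is stronger than required, so there is slack. This is consistent with the counterexample of \Cref{sec:counterexample}, where core vertices have degree far above $2k-1$ and $M_Q(\sigma)=|Q|$.
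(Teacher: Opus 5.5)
Your proof is correct and is essentially the paper's: both use the Nash property of the optimal coloring to get $m_v(\sigma(v);\sigma)\le\lfloor d(v)/k\rfloor\le 1$, deduce $M_S(\sigma)\le\lfloor |S|/2\rfloor<|S|$, and then invoke Theorem~\ref{thm:necessary-conflicts}. The only difference is cosmetic: you say the monochromatic edges form a matching, whereas the paper double-counts via $2M_S(\sigma)\le\sum_{v\in S}m_v(\sigma(v);\sigma)\le|S|$.
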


\begin{proof}
Let $\sigma$ be an optimal coloring, and hence a Nash
equilibrium.
At a Nash equilibrium, the current color of $v$ minimizes
the number of its neighbors assigned that color. Therefore,
\[
m_v(\sigma(v);\sigma)
\le
\left\lfloor\frac{d(v)}{k}\right\rfloor
\le 1.
\]
For every nonempty $S\subseteq V$,
\[
2M_S(\sigma)
\le
\sum_{v\in S}m_v(\sigma(v);\sigma)
\le |S|,
\]
so $M_S(\sigma)\le\lfloor |S|/2\rfloor<|S|$.
Theorem~\ref{thm:necessary-conflicts} now applies.
\end{proof}

The preceding results concern conflicts before a deviation
from an optimal coloring. We next consider the complementary
question of how many conflicts can remain after a strong
deviation from a Nash equilibrium, without assuming
optimality.

\subsection{A sharp bound after a deviation}
\label{subsec:post-conflicts}

Here the starting coloring need only be a Nash equilibrium.
The comparison with a unilateral move gives an upper bound
on the conflicts that can survive a joint deviation.

\begin{proposition}[Post-deviation conflicts]
\label{prop:post-conflicts}
Let $\sigma$ be a Nash equilibrium and $(S,\tau)$ a strong deviation.
Then
\begin{equation}
\label{eq:post-conflict-bound}
  M_S(\tau)\le
  \left\lfloor\frac{2e(S)-|S|}{4}\right\rfloor.
\end{equation}
The bound is attained for arbitrarily large coalitions, for every
fixed $k\ge2$.
\end{proposition}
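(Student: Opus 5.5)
The plan is to compare, for each coalition member, its gain from the joint move with the unilateral move to its new color. That unilateral move is unprofitable because $\sigma$ is a Nash equilibrium. Edges from $v\in S$ to vertices outside $S$ have the same endpoint colors in both comparisons, so only the neighbors of $v$ inside $S$ can matter. For $v\in S$ write $d_S(v)$ for its number of neighbors in $S$, and set
\[
t_v:=|\{w\in N(v)\cap S:\tau(w)=\tau(v)\}|,\qquad
s_v:=|\{w\in N(v)\cap S:\sigma(w)=\tau(v)\}|.
\]

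\emph{Step 1 (a per-vertex inequality).} Since the graph is unweighted, a strict gain means $u_v(\tau)\ge u_v(\sigma)+1$, that is, $m_v(\tau(v);\tau)\le m_v(\sigma(v);\sigma)-1$. The Nash condition at $\sigma$ gives $m_v(\sigma(v);\sigma)\le m_v(\tau(v);\sigma)$. Together,
\[
m_v(\tau(v);\tau)\le m_v(\tau(v);\sigma)-1 .
\]
Neighbors of $v$ outside $S$ keep their colors, so they contribute equally to both sides and cancel. Hence $t_v\le s_v-1$.

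\emph{Step 2 (disjointness).} The sets counted by $t_v$ and $s_v$ are disjoint. Indeed, if $w\in S$ satisfied both $\tau(w)=\tau(v)$ and $\sigma(w)=\tau(v)$, then $\sigma(w)=\tau(w)$. This contradicts the requirement that every member of $S$ changes color. Therefore $t_v+s_v\le d_S(v)$. Combining this with Step 1 gives
\[
2t_v\le d_S(v)-1 .
\]

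\emph{Step 3 (summation).} Summing over $v\in S$, each monochromatic internal edge under $\tau$ is counted twice, so $\sum_{v\in S}t_v=2M_S(\tau)$. Also $\sum_{v\in S}d_S(v)=2e(S)$. Thus $4M_S(\tau)\le 2e(S)-|S|$, and integrality gives the stated floor.

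\emph{Step 4 (sharpness).} Equality forces, at every $v\in S$, three tight conditions:
\begin{itemize}
\item the Nash inequality is tight, meaning $\sigma(v)$ and $\tau(v)$ tie at $\sigma$;
\item the gain is exactly one;
\item every internal neighbor of $v$ is counted by $t_v$ or $s_v$, with $s_v=t_v+1$.
\end{itemize}
I would first design a small gadget coalition $S_0$ in which each internal degree $d_S(v)$ is odd and these equalities hold. Outside anchor vertices, fixed pendants or independent sets of $k-2$ extra colors as in Section~\ref{sec:counterexample}, would then adjust the external counts $m_v(c;\sigma)$ so that $\sigma$ is a Nash equilibrium everywhere, including at the anchors. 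The gadget should also satisfy $2e(S_0)-|S_0|\equiv0\pmod 4$.

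Taking $r$ vertex-disjoint copies of the gadget, possibly joined through anchors in a way that keeps each copy's computations local, and letting $S$ be the union of the gadget coalitions, the coalition size grows linearly in $r$. Both sides of the bound then add over copies, so equality persists.

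The main obstacle is Step 4: finding an explicit gadget with $M_{S_0}(\tau)>0$ in which all three tightness conditions hold at once, for every $k\ge2$. Extra colors must be handled by giving each gadget vertex equal numbers of anchor neighbors in the unused color classes, so that those colors are never strictly better.
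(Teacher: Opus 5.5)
Your Steps 1--3 are correct and follow essentially the paper's argument. The paper also compares $\tau$ with the unilateral move of $v$ to $\tau(v)$, which gives $1\le s_v-t_v$ in your notation. It then uses the fact that a $\tau$-monochromatic internal edge contributes no positive indicator, which is your disjointness step. The only difference is bookkeeping: the paper sums edge by edge, while you bound each vertex first. Your per-vertex form $t_v\le\lfloor (d_S(v)-1)/2\rfloor$ in fact gives the slightly sharper $4M_S(\tau)\le 2e(S)-|S|-|\{v\in S: d_S(v)\text{ even}\}|$, which is why odd internal degrees appear in your tightness analysis. Note, however, that your three tightness conditions are forced only when $4M_S(\tau)=2e(S)-|S|$ holds exactly, not merely when $M_S(\tau)$ equals the floor.

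The genuine gap is Step 4. The proposition asserts that the bound is attained for arbitrarily large coalitions, for every $k\ge2$. You list what a gadget would have to satisfy and then name finding one as the main obstacle, so this half of the statement is unproved. The missing ingredient is an explicit construction, and a single family suffices without any copying:
\begin{itemize}
\item Let $S=X\cup Y$ induce $K_{2t}$ with $|X|=|Y|=t$.
\item Add anchor sets $T_1,\dots,T_k$ of size $t$, with every two distinct sets completely joined.
\item Join every vertex of $X$ to one fixed vertex of $T_1$, every vertex of $Y$ to one fixed vertex of $T_2$, and every vertex of $S$ to all of $T_j$ for $j\ge3$.
\item Color $X\cup T_1$ with $1$, $Y\cup T_2$ with $2$, and $T_j$ with $j$.
\end{itemize}
Each core vertex then has exactly $t$ neighbors of every color. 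Each anchor has $0$ or $t$ same-colored neighbors and at least $t$ of every other color. Hence $\sigma$ is a Nash equilibrium. Swapping colors $1$ and $2$ on $S$ leaves each member with $t-1$ clique conflicts and removes its single anchor conflict, so every member gains exactly one. With $|S|=2t$ and $e(S)=\binom{2t}{2}$, this gives $M_S(\tau)=2\binom{t}{2}=t(t-1)=(2e(S)-|S|)/4$. The construction realizes precisely your tightness conditions: $d_S(v)=2t-1$, $s_v=t$, and $t_v=t-1$. Already $t=1$ is a valid gadget with $M_{S_0}(\tau)=0$, and disjoint copies of it would settle the literal statement, so your requirement $M_{S_0}(\tau)>0$ is more than needed. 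The paper's family has the added benefit of showing that $M_S(\tau)$ can exceed $|S|$ for $t\ge4$.
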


\begin{proof}
For $v\in S$, let $\sigma^{v}$ be obtained from $\sigma$ by
recoloring only $v$ to $\tau(v)$. Since $\sigma$ is a Nash equilibrium,
$u_v(\sigma^{v})\le u_v(\sigma)$. Hence
\[
1\le u_v(\tau)-u_v(\sigma)
\le u_v(\tau)-u_v(\sigma^{v})
=\sum_{w\in N(v)\cap S}
 \bigl(\mathbf 1_{\{\sigma(w)=\tau(v)\}}
       -\mathbf 1_{\{\tau(w)=\tau(v)\}}\bigr).
\]
After summing over $v\in S$, each internal edge monochromatic
under $\tau$ contributes exactly $-2$: both its endpoints
change color, so neither positive indicator is one.
Every other internal edge contributes at most $2$.
Consequently,
\[
  |S|\le 2\bigl(e(S)-M_S(\tau)\bigr)-2M_S(\tau)
        =2e(S)-4M_S(\tau),
\]
which proves the bound. The following construction proves sharpness.
\end{proof}

\begin{example}[Equality and failure of a universal linear bound]
\label{ex:post-conflicts}
Fix $k\ge2$ and an integer $t\ge1$. Let $S=X\cup Y$ induce a
clique on $2t$ vertices, with $|X|=|Y|=t$.
Add disjoint anchor sets $T_1,\ldots,T_k$, each of size $t$,
and join distinct anchor sets completely. Join every vertex of
$X$ to one fixed vertex of $T_1$, and every vertex of $Y$ to
one fixed vertex of $T_2$. Join every vertex of $S$ to every
vertex of $T_j$ for $j\ge3$. There are no further edges.

Under $\sigma$, color $X$ and $T_1$ with $1$, color $Y$ and
$T_2$ with $2$, and color each remaining $T_j$ with $j$.
Each core vertex has exactly $t$ neighbors of each available
color, so no unilateral move improves its utility.
An anchor vertex has either zero or $t$ same-colored neighbors
and at least $t$ neighbors of every other color.
Thus $\sigma$ is a Nash equilibrium.

Now interchange colors $1$ and $2$ on $S$, keeping the anchors
fixed, to obtain $\tau$. Each core vertex retains $t-1$
monochromatic clique edges and loses its one monochromatic
anchor edge, so every member of $S$ gains exactly one unit.
Moreover,
\[
 |S|=2t,\qquad e(S)=\binom{2t}{2},\qquad
 M_S(\tau)=2\binom{t}{2}=t(t-1)
          =\frac{2e(S)-|S|}{4}.
\]
For $t\ge4$, this gives $M_S(\tau)>|S|$; the ratio
$M_S(\tau)/|S|=(t-1)/2$ is unbounded.
These coalitions are not minimal when $t>1$: one vertex of
$X$ and one of $Y$ can already profit by swapping colors.
Accordingly, the example concerns arbitrary strong deviations
and makes no claim about a bound for minimal coalitions.
\end{example}

\subsection{A density condition for a linear bound}
\label{subsec:density}

Although Example~\ref{ex:post-conflicts} shows that
post-deviation conflicts can grow quadratically with
coalition size, the bound becomes linear when the
induced coalition graph is sufficiently sparse.

\begin{corollary}[Density restriction]
\label{cor:density}
Let $\sigma$ be a Nash equilibrium and let $(S,\tau)$
be a strong deviation from $\sigma$. If
\[
\frac{2e(S)}{|S|}\le 5,
\]
then $M_S(\tau)\le |S|$. Conversely, if
$M_S(\tau)>|S|$, then
\[
2e(S)\ge 5|S|+4
\qquad\text{and}\qquad
|S|\ge 7.
\]
\end{corollary}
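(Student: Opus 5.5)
We derive Corollary~\ref{cor:density} directly from Proposition~\ref{prop:post-conflicts}, using
\[
M_S(\tau)\le\left\lfloor\frac{2e(S)-|S|}{4}\right\rfloor .
\]

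For the first claim, assume $2e(S)\le 5|S|$. Then $2e(S)-|S|\le 4|S|$, so the right-hand side of \eqref{eq:post-conflict-bound} is at most $\lfloor 4|S|/4\rfloor=|S|$. Hence $M_S(\tau)\le|S|$.

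For the converse, suppose $M_S(\tau)>|S|$. Since both quantities are integers, $M_S(\tau)\ge|S|+1$. Substituting this into \eqref{eq:post-conflict-bound} and removing the floor gives
\[
|S|+1\le\frac{2e(S)-|S|}{4},
\]
which is equivalent to $2e(S)\ge 5|S|+4$.

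For the bound on $|S|$, we use $e(S)\le\binom{|S|}{2}$, so $2e(S)\le |S|(|S|-1)$. Combining with the previous inequality gives $|S|(|S|-1)\ge 5|S|+4$, that is, $|S|^2-6|S|-4\ge0$. The positive root is $3+\sqrt{13}\approx 6.61$, so every integer $|S|\le 6$ violates this inequality (for $|S|=6$ the left side is $-4$). Therefore $|S|\ge7$.

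The only point needing care is the integrality step $M_S(\tau)\ge|S|+1$, which produces the additive constant $+4$. The clique bound $e(S)\le\binom{|S|}{2}$ is the only extra ingredient required for $|S|\ge 7$.
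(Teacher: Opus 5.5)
Your proposal is correct and follows essentially the same route as the paper: you apply \eqref{eq:post-conflict-bound} directly for the first claim, use integrality to get $M_S(\tau)\ge|S|+1$ and hence $2e(S)\ge 5|S|+4$, and then combine this with $2e(S)\le |S|(|S|-1)$ to obtain $|S|^2-6|S|-4\ge0$, which forces $|S|\ge7$. Your explicit check at $|S|=6$ is a small extra detail that the paper leaves implicit.
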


\begin{proof}
By~\eqref{eq:post-conflict-bound}, if $2e(S)\le 5|S|$,
then
\[
M_S(\tau)
\le \frac{2e(S)-|S|}{4}
\le |S|.
\]
Conversely, $M_S(\tau)>|S|$ implies
$M_S(\tau)\ge |S|+1$, and hence
\[
2e(S)\ge 4M_S(\tau)+|S|\ge 5|S|+4.
\]
Writing $s=|S|$ and using $2e(S)\le s(s-1)$ gives
$s^2-6s-4\ge 0$, so $s\ge 7$.
\end{proof}

The relevant condition is therefore the average degree
of the induced coalition graph. A violation of the linear
bound requires average degree at least $5+4/|S|$.
In particular, the bound holds for every strong deviation
on a graph whose maximum average degree is at most five.

\section{Concluding remarks}

Social optimality guarantees stability against unilateral
deviations in Max-$k$-Cut games, but our result shows that
this guarantee does not extend to coalitions, even on
simple, connected, unweighted graphs.
For every $k\ge3$, an optimal coloring can admit a joint
deviation that strictly benefits every participating player.
Moreover, the deviation can preserve social welfare:
the coalition's gains are offset by losses to players
outside it. Maximizing aggregate welfare therefore does
not, by itself, remove the incentive for coordinated
departures from a socially optimal outcome.

The internal-conflict analysis can help to identify conditions 
under which stability is restored.
The deviation identity relates the coalition's gains to
the change in cut value and the conflicts removed within
the coalition, recovering the known maximum-degree
condition for stability of optimal colorings.
For deviations from general Nash equilibria, the sharp
bound on surviving internal conflicts also shows why
coalition size alone cannot control these conflicts:
the density of the subgraph induced by the coalition
plays an essential role.

The counterexample leaves two distinct questions open:
does every unweighted Max-$k$-Cut game admit a strong
equilibrium, and does it always admit an optimal coloring
that is a strong equilibrium?
Disproving stability of every optimum does not rule out
the possibility of selecting a stable optimum.
This distinction suggests a natural direction for further
work: to determine when social optimality and coalitional
stability can be achieved simultaneously, and how such
colorings can be found.
Characterizing the graph structures that permit this
compatibility would help turn welfare maximization into
a prescription that also accounts for players' incentives
to coordinate.

\section*{Statements and Declarations}

\subsection*{Funding}
The author acknowledges financial support through a
doctoral fellowship at the Indian Institute of Technology
Bombay.

\subsection*{Competing interests}
The author declares no competing interests relevant to
this article.

\subsection*{Data and code availability}
All graph constructions and mathematical proofs supporting
the results are included in the article. The results can
be verified directly from these constructions and proofs,
without external datasets or computational code.

\subsection*{Use of AI tools}
The counterexample reported in
this note was developed in dialogue with ChatGPT (GPT-6 Astra);
all mathematical statements, proofs, and numerical checks were
subsequently verified independently by the author.



\end{document}